\newtheorem{thm}{Theorem}[section] %the resolution could also be [subsection]
\newtheorem{lem}[thm]{Lemma}
\newtheorem{ques}[thm]{Question}
\theoremstyle{definition}
\newcommand\operA[2]{{\if!#2!\operatorname{#1}\else{\operatorname{#1}_{#2}^{\phantom{I}}}\fi}} % To be used within Bdefs. Usage: $\operA{N}{K/F}$ produces $N_{K/F}$; $\operA{N}{}$ produces $N$.
\newcommand{\Trace}[1][]{\if!#1!\operatorname{Tr}\else{\operatorname{Tr}_{#1}^{\phantom{I}}}\fi} % Usage: $\Tr[K/F](a)$.
\long\def\forget#1\forgotten{{}} %
\def\({\left(}
\def\){\right)}
\newcommand\LAY[3][]{{\begin{array}{c}\mbox{#2} \if#1!{}\else{+}\fi \\ \mbox{#3}\end{array}}}
\def\ps@pprintTitle{%
 \let\@oddhead\@empty
 \let\@evenhead\@empty
 \def\@oddfoot{}%
 \let\@evenfoot\@oddfoot}
\newcommand{\bigperp}{%
  \mathop{\mathpalette\bigp@rp\relax}%
  \displaylimits
}
\newcommand{\bigp@rp}[2]{%
  \vcenter{
    \m@th\hbox{\scalebox{\ifx#1\displaystyle2.1\else1.5\fi}{$#1\perp$}}
  }%
}
\newif\iffurther
\newcommand\kelly[1]{{\color{red}{#1}}}
\journal{??}
\begin{document}
\begin{frontmatter}

\title{Biquaternion Algebras, Chain Lemma and Symbol Length}

\author{Adam Chapman}
\address{School of Computer Science, Academic College of Tel-Aviv-Yaffo, Rabenu Yeruham St., P.O.B 8401 Yaffo, 6818211, Israel}
\ead{adam1chapman@yahoo.com}

\author{Kelly McKinnie}
\address{Department of Mathematical Sciences, University of Montana, Missoula, MT 59812, USA}
\ead{kelly.mckinnie@mso.umt.edu}

\begin{abstract}
In this note, we present a chain lemma for biquaternion algebras over fields of characteristic 2 in the style of the equivalent chain lemma by Sivatski in characteristic not 2, and conclude a bound on the symbol length of classes in ${_{2^n}Br}(F)$ whose symbol length in ${_{2^{n+1}}Br}(F)$ is at most 4.
 \end{abstract}

\begin{keyword}
Division Algebras, Cyclic Algebras, Chain Lemma, Quaternion Algebras, Biquaternion Algebras, Fields of Characteristic 2
\MSC[2010] 16K20 (primary); 11E04, 11E81 (secondary)
\end{keyword}
\end{frontmatter}

\section{Introduction}

A quaternion algebra is a central simple algebra of degree 2 over its center.
Given a field $F$ of characteristic 2, a quaternion algebra over $F$ takes the form $[\alpha,\beta)=F\langle i,j : i^2+i=\alpha, j^2=\beta, jij^{-1}=i+1\rangle$ for some $\alpha \in F$ and $\beta \in F^\times$.
These algebras generate ${_2Br}(F)$ by \cite[Theorem 9.1.1]{GS} (see also \cite[Chapter VII, Theorem 28]{Albert:1968}), and therefore the structure theory of their tensor products plays a major role in the study of this group.
One aspect of this theory is to understand the different symbol presentations of a single algebra by means of a chain lemma - a lemma that shows how one obtains from one presentation all the other presentations as well.
A three-step chain lemma for quaternion algebras (in characteristic 2) was given in \cite[Section 14, Theorem 7]{Draxl:1983}, and in \cite{CFM} it was used to bound the symbol length of central simple algebras of exponent $2^n$ that are Brauer equivalent to the tensor product of two symbol algebras of degree $2^{n+1}$ from above by 4. Upper bounds for the symbol length in ${_{2^n}Br}(F)$ when $\operatorname{char}(F)\neq 2$ where provided in \cite{Chapman:2022} for algebras that are Brauer equivalent to the tensor product of 3 or 4 symbol algebras of degree $2^{n+1}$ based on Sivatski's chain lemma for biquaternion algebras from \cite{Sivatski:2012}.

In \cite[Theorem 4.1]{Chapman:2015} a chain lemma for biquaternion algebras was introduced when $\operatorname{char}(F)= 2$, but this lemma does not specify the distance between two presentations and thus does not provide a concrete bound on the symbol length.
The goal of this paper is to provide a chain lemma in characteristic 2 in the manner of \cite{Sivatski:2012} and obtain analogous results to \cite{Chapman:2022}.

\section{Statement of Main Theorem}
Let $F$ be a field of characteristic 2. Let $A$ be a central simple algebra over $F$ of exponent 2 and degree 4. By a theorem of Albert, $A$ is a biquaternion algebra i.e., the tensor product of two quaternion algebras over $F$. In the spirit of \cite{Sivatski:2012} and other chain lemmas from quadratic form theory, this section investigates how far apart two symbol presentations of a biquaternion algebra can be. The notion of distance is formalized by the edges in the following graph.

%\kelly{Should we really take order into account? If we didn't and said $(Q_1,Q_2) = (Q_2,Q_1)$, I don't see how it would change the argument below, so would still have that the diameter is 3. It would feel like a more natural object to me. If not, what is the distance between $(Q_1,Q_2)$ and $(Q_2,Q_1)$ }

Let $G_A=(V,E)$ be the labeled graph where $V$ is the set of ordered pairs of isomorphism classes of quaternion algebras $Q_1$ and $Q_2$ over $F$ with $A \cong Q_1\otimes_FQ_2$ and there is an edge between vertex $(Q_1,Q_2)$ and vertex $(Q_3,Q_4)$ if there exist symbol presentations $Q_1=[a,b)$ and $Q_2= [c,d)$ where either $Q_3=[a+c,b)$ and $Q_4=[c,bd)$ or $Q_3 = [a,b(a+c))$ and $Q_4 = [c,d(a+c))$. We label the first option as a type I edge, and the second as type II. Note that in both of these cases the isomorphism $Q_1\otimes Q_2 \cong Q_3\otimes Q_4$ is clear.

\begin{thm} For any biquaternion algebra $A$, the diameter of $G_A$ is at most 3, where the minimal path between two edges consists of at most two edges of type I and at most one edge of type II such that the type II edge is in between the type I edges if all three are needed.
\label{mainthm}
\end{thm}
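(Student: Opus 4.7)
The plan is to translate Sivatski's chain lemma approach from \cite{Sivatski:2012} into characteristic 2, replacing square-class manipulations with Artin--Schreier $\wp$-class manipulations (where $\wp(x)=x^2+x$), which is the natural arithmetic attached to the symbol $[\alpha,\beta)$. Given two decompositions $A=Q_1\otimes_FQ_2\cong Q_3\otimes_FQ_4$, the goal is to produce a path of length at most three in $G_A$ from $(Q_1,Q_2)$ to $(Q_3,Q_4)$ with the specified pattern type~I, type~II, type~I when all three edges are required.

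The first step would use a type I edge to align the Artin--Schreier slot of $Q_1$ with that of $Q_3$. Because the vertices of $G_A$ are pairs of isomorphism classes, I may freely pick symbol presentations $Q_1=[a_1,b_1)$, $Q_2=[c_1,d_1)$, $Q_3=[a_3,b_3)$ at the cost of zero edges. The type I edge sends the first slot $a_1$ to $a_1+c_1$, so by varying $c_1$ inside its isomorphism class via the one-quaternion chain lemma of \cite[Section 14, Theorem 7]{Draxl:1983}, a whole family of $\wp$-classes becomes reachable in a single edge. The target is to show that this family meets the $\wp$-class of $a_3$, producing an intermediate vertex $(Q_1',Q_2')$ with $Q_1'=[a_3,b')$ for some $b'$.

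The second move, a type II edge, is then used to correct the multiplicative slot. Since $[a_3,b')\cong[a_3,b_3)$ exactly when $b'/b_3$ is a norm from the Artin--Schreier extension $F[x]/(x^2+x+a_3)$, this ratio decomposes into basic norm factors of the form $a_3+c$. The type II edge multiplies both second slots by precisely $a_3+c'$, where $c'$ is the first slot of the partner quaternion in the current pair; choosing the representative of $Q_2'$ so that $c'=c$ converts $Q_1'$ into $Q_3$ while transforming $Q_2'$ in a controlled way. A final type I edge then resolves any remaining discrepancy in the second quaternion slot, producing $Q_4$ on the other side and completing the path.

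The main obstacle will be the common-slot step: establishing that a single type I edge combined with free reselection of presentations is already enough to match the Artin--Schreier classes of the two first slots, rather than needing a second type I move before the type II move. This is the characteristic-2 analogue of Sivatski's common-slot lemma, and I would build its proof on top of \cite[Theorem 4.1]{Chapman:2015}, which establishes connectivity of $G_A$ but without an edge count, sharpening that argument to track how many moves each alignment costs. The routine bookkeeping for composing the three moves uses only the standard identities $[\alpha,\beta)\otimes[\alpha,\gamma)\sim[\alpha,\beta\gamma)$ and $[\alpha,\beta)\otimes[\gamma,\beta)\sim[\alpha+\gamma,\beta)$ in $\Br(F)$, together with the observation that $[\alpha,\beta)\cong[\alpha,\beta(\alpha+c))$ whenever $\alpha+c\in N_{F[\wp^{-1}(\alpha)]/F}$.
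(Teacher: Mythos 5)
Your three-move architecture (type I, type II, type I) matches the paper's, but the proposal leaves the decisive step unproven and, as framed, aims at something stronger than what is actually available. You propose to use the first type I edge to bring the Artin--Schreier slot of $Q_1$ all the way to that of $Q_3$, deferring the justification to a ``common-slot lemma'' you acknowledge as the main obstacle. The paper does not do this and almost certainly cannot: instead it moves \emph{both} endpoints one type I step each toward a common intermediate slot $a'$ that is neither $a_1$ nor $a_3$. The existence of that common value is exactly where the real input enters: one forms the $10$-dimensional quadratic form $[a+c+\alpha+\delta,1]\perp b[1,a]\perp d[1,c]\perp\beta[1,\alpha]\perp\delta[1,\gamma]$ in $I_q^3F$, checks its Arf and Clifford invariants vanish, and invokes the characteristic-$2$ analogue of Pfister's theorem (De Medts) to get isotropy, hence an identity $a+c+\alpha+\gamma+\lambda^2+\lambda=N_a b+N_c d+N_\alpha\beta+N_\gamma\delta$. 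That identity is what makes the two first slots coincide after one type I move on each side. Nothing in your sketch, including sharpening \cite[Theorem 4.1]{Chapman:2015}, supplies a substitute for this; connectivity of $G_A$ without an edge count does not localize the obstruction to a single quadratic-form isotropy statement.

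The second gap is in the middle move. You say the ratio $b'/b_3$ ``decomposes into basic norm factors of the form $a_3+c$'' --- plural --- but only one type II edge is available, so you must show the correction is a \emph{single} factor $(a+c)+\wp(y)$ with $c$ simultaneously realizable as the first slot of the partner quaternion. The paper gets this from a common-inseparable-slot lemma (there is a square-central $y$ with $[a,b\beta)=[a,y^2)=[c,y^2)=[c,d\delta)$), from which $e=y^2$ is a value of all three norm forms $N_a$, $N_{a+c}$, $N_c$ simultaneously by \cite[Corollary 4.7.5]{GS}; writing $e=x^2(a+c)+xy+y^2$ and normalizing $x$ is what turns the correction into exactly one type II move (or zero, if $x=0$). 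It also needs a preliminary reduction, via isotropy of the Albert form of $[c,d)\otimes[\gamma,\delta)$, to arrange that the two partner quaternions share a first slot before the type II move is applied --- a step absent from your outline. Without these two ingredients the plan does not close.
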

Recall that the diameter of a graph is the supremum on the distance (i.e., length of minimal path) between two vertices, where the vertices range over all the pairs of vertices in the graph. In this case the diameter is measuring how far apart two representations of $A$ as a biquaternion algebra can be using the simple steps determining an edge. 

\section{Proof of Main Theorem}
 Before starting the formal proof of Theorem \ref{mainthm} we make some notes and remarks about biquaternion algebras and related quadratic forms.  For background see \cite{EKM}.

\begin{lem}\label{Albert}
If $\psi$ is a nonsingular form of dimension at least 2 and $\varphi=[1,\alpha] \perp \psi$ is isotropic, then there exist $r \in F$ and $v \in V_\psi$ for which
$r^2+r+\alpha+\psi(v)=0$.
\end{lem}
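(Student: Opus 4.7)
The plan is to unpack the isotropy of $\varphi$ and handle a short case split. By hypothesis there is a nonzero triple $(x_0, y_0, v_0) \in F \times F \times V_\psi$ with $x_0^2 + x_0 y_0 + \alpha y_0^2 + \psi(v_0) = 0$. In the generic case $y_0 \neq 0$, dividing the isotropy relation by $y_0^2$ and using the quadratic homogeneity $\psi(v_0/y_0) = \psi(v_0)/y_0^2$ yields the conclusion immediately with $r = x_0/y_0$ and $v = v_0/y_0$. The real work is therefore to handle $y_0 = 0$.

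When $y_0 = 0$ we have $x_0^2 + \psi(v_0) = 0$ with $(x_0, v_0) \neq 0$. Observing that the conclusion is equivalent to $\psi$ representing some element of the Artin--Schreier coset $\alpha + \wp(F) = \{\alpha + r^2 + r : r \in F\}$, I would split on $x_0$. If $x_0 = 0$, then $v_0 \neq 0$ is $\psi$-isotropic, so by Witt decomposition the nonsingular form $\psi$ (of dimension $\geq 2$) splits off a hyperbolic plane $\HH$ and is therefore universal; in particular it represents $\alpha$ itself, and $r = 0$ together with any $v$ satisfying $\psi(v) = \alpha$ witnesses the conclusion. If instead $x_0 \neq 0$, then $\psi(v_0) = x_0^2$ is a nonzero square, so by quadratic homogeneity $\psi$ represents every square, in particular $\alpha^2$.

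To close the second subcase I would invoke the characteristic-$2$ identity $\alpha^2 + \alpha = r^2 + r$ at $r = \alpha$, which places $\alpha^2$ inside the coset $\alpha + \wp(F)$: taking $r = \alpha$ and $v = \alpha v_0 / x_0$ gives $\psi(v) = \alpha^2$ and $r^2 + r + \alpha + \psi(v) = \alpha^2 + \alpha + \alpha + \alpha^2 = 0$ by direct computation. No step in this plan involves serious calculation; the only nontrivial input is the standard Witt-theoretic fact that a nonsingular isotropic quadratic form of dimension $\geq 2$ in characteristic $2$ is universal, which is the main obstacle one needs to have at hand.
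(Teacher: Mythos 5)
Your proposal is correct and follows essentially the same route as the paper's own proof: the same case split on $y$ and then on $x$, the same choices $r=x/y$ in the generic case, universality of the isotropic nonsingular $\psi$ when $x=y=0$, and $r=\alpha$, $v=\alpha v_0/x_0$ in the last case. The only difference is that you spell out the intermediate computations (e.g.\ $\psi(v_0)=x_0^2$ and the coset reformulation) that the paper leaves implicit.
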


\begin{proof}
Since $\varphi$ is isotropic, there exist $x,y \in F$ and $w \in V_\psi$ such that $x^2+xy+\alpha y^2+\psi(w)=0$.
If $y\neq 0$, take $r=\frac{x}{y}$ and $v=\frac{1}{y} w$ and we are done.
Suppose then that $y=0$.
If $x=0$ then $\psi$ is isotropic, which means it is universal, and thus there exists $v\in V_\psi$ for which $\alpha+\psi(v)=0$.
Suppose $x\neq 0$.
Then $\alpha^2+\alpha+\alpha+\psi(\frac{\alpha}{x}w)=0$, so by taking $r=\alpha$ and $v=\frac{\alpha}{x}w$, the story is complete.
\end{proof}
 
Now, given $[a,b)\otimes [c,d)\otimes [\alpha,\beta)\otimes [\gamma,\delta) \cong M_{16}(F)$, set $\varphi$ to be the 10-dimensional quadratic form
\[\varphi = [a+c+\alpha+\delta,1]\perp b[1,a]\perp d[1,c]\perp \beta[1,\alpha]\perp\delta[1,\gamma] \in I_q^3F.\]
By \cite[pg.54]{Baeza:1978}, the Clifford Algebra of $\varphi$ is 
\[[a,b)\otimes [c,d)\otimes [\alpha,\beta)\otimes [\gamma,\delta)\otimes[a+c+\alpha+\gamma,1) \cong M_{32}(F).\]
The Arf invariant is $a+c+\alpha+\gamma+a+c+\alpha+\gamma = 0$ (see e.g., \cite[pg. 53]{Baeza:1978} or \cite[pg. 528]{MammoneShapiro:1989}).
Since $\varphi$ is 10-dimensional, has trivial Arf invariant and Clifford invariant, by \cite[Theorem 4.10]{DeMedts:2002} $\varphi$ is isotropic. This is the characteristic 2 version of (part of) Pfister's Theorem (see \cite[Page 123]{Pfister:1966}). For any $a \in F$, let $N_a = [1,a] = x^2+xy+ay^2$, the norm form for $a$. The following two lemmas are used in the proof of Theorem \ref{mainthm}.

\begin{lem} There exist $\lambda \in F$ and values of the forms $N_a,\,N_c,\,N_\alpha,\,N_\gamma$ so that
\begin{equation*}
a+c+\alpha+\gamma +\lambda+\lambda^2 = N_a\cdot b +N_c\cdot d+N_\alpha\cdot\beta+N_\gamma\cdot\delta
\end{equation*}
\label{lem1}
\end{lem}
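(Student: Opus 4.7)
The plan is to deduce the identity directly from the isotropy of the ten-dimensional form $\varphi$ established immediately above the statement, by feeding the resulting isotropy vector into Lemma \ref{Albert}.

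First, I would rewrite the "extra" binary summand of $\varphi$ in a shape suitable for the hypothesis of Lemma \ref{Albert}. In characteristic $2$, the coordinate swap $X \leftrightarrow Y$ identifies the binary form $[a+c+\alpha+\gamma,1]=(a+c+\alpha+\gamma)X^2+XY+Y^2$ with $[1,\,a+c+\alpha+\gamma]$. Hence we may present
\[\varphi \isom [1,\,a+c+\alpha+\gamma] \perp \psi,\qquad \psi := b[1,a] \perp d[1,c] \perp \beta[1,\alpha] \perp \delta[1,\gamma],\]
where $\psi$ is nonsingular of dimension $8\geq 2$, so Lemma \ref{Albert} applies.

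Second, since $\varphi$ is isotropic (as recorded just before the statement), Lemma \ref{Albert} produces an element $r \in F$ and a vector $v \in V_\psi$ for which
\[r^2 + r + (a+c+\alpha+\gamma) + \psi(v) = 0.\]
Decomposing $v=(v_1,v_2,v_3,v_4)$ along the four orthogonal summands of $\psi$, the value $\psi(v)$ takes the explicit form
\[\psi(v) = b\,N_a(v_1) + d\,N_c(v_2) + \beta\,N_\alpha(v_3) + \delta\,N_\gamma(v_4).\]
Setting $\lambda := r$ and rearranging yields exactly the claimed identity, with each symbol $N_\mu$ interpreted as its value $N_\mu(v_i)$ at the chosen vector.

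The proof is essentially a repackaging of isotropy, so there is no substantial obstacle. The only point requiring care is the cosmetic normalisation of the leading binary summand: the form $[a+c+\alpha+\gamma,1]$ carries the scalar in the "wrong" slot for Lemma \ref{Albert}, and one must observe that a coordinate swap realises the isometry $[a+c+\alpha+\gamma,1]\isom[1,a+c+\alpha+\gamma]$ in characteristic $2$; once this is flagged, the rest is immediate from the definitions.
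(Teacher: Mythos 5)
Your proposal is correct and is exactly the argument the paper intends: its entire proof of this lemma is the single line ``This is a special case of Lemma \ref{Albert},'' relying on the isotropy of the ten-dimensional form $\varphi$ established just beforehand, which is precisely what you spell out (including the harmless coordinate swap $[a+c+\alpha+\gamma,1]\isom[1,a+c+\alpha+\gamma]$ needed to match the hypothesis of Lemma \ref{Albert}). Your version simply makes explicit the details the paper leaves to the reader.
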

\begin{proof} 
%Let $\xi = a+c+\alpha+\gamma$ and $\rho = N_a\cdot b +N_c\cdot +N_\alpha \cdot \beta + N_\gamma \cdot \delta$. Since $\varphi$ is isotropic there exist $x,y,r_i \in F$ with $\varphi(x,y,r_1,\ldots,r_8) = x^2+xy+\xi y^2+\rho(r_1,\ldots,r_8) = 0$. If $y\ne 0$ then $\left(\frac xy\right)^2+\frac xy+\xi+\rho(\frac{r_1}y,\ldots) = 0$. If $y = 0$ and $x = 0$ then $\rho$ is isotropic and hence universal and therefore there exists $r_i \in F$ so that $\xi + \rho(r_1,\ldots) = 0$. If $y=0$ and $x\ne 0$ then $\xi^2+\xi+\xi +\rho(\frac{r_1}x,\ldots) = 0$. In each case we have $\lambda,r_i \in F$ so that $\lambda^2+\lambda+\xi+\rho(r_1,\ldots,r_8) = 0$.
This is a special case of Lemma \ref{Albert}.
\end{proof}

\begin{lem} If $A = [a,b) = [\alpha,\beta)$ then there exists a square-central element $y \in A$ for which $A = [a,b) = [a,y^2) = [\alpha,y^2) = [\alpha,\beta)$.
\label{lem2}
\end{lem}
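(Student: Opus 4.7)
The plan is to produce a single square-central element $y \in A$ that serves as a ``pure'' generator in both presentations simultaneously. Write $A = F\langle i,j\rangle = [a,b)$ and $A = F\langle I,J\rangle = [\alpha,\beta)$ using the standard relations, and consider the two $2$-dimensional $F$-subspaces
\[ V_1 := Fj + F(ij), \qquad V_2 := FJ + F(IJ) \]
of $A$, i.e.\ the pure parts of the two symbol presentations.

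The key claim is that any nonzero $y \in V_1$ already yields a presentation $A = [a, y^2)$, and symmetrically for $V_2$. For $V_1$: take $y = xj + z(ij)$ with $(x,z)\neq(0,0)$; using $ji = (i+1)j$ and $(ij)i = aj$ a short calculation gives $yi + iy = y$, so $yiy^{-1} = i+1$ whenever $y$ is invertible. Moreover $y^2 = b\,N_a(x,z) \in F$, which is nonzero when $N_a$ is anisotropic and in particular whenever $A$ is a division algebra. Hence $A = F\langle i, y\rangle \cong [a, y^2)$. Equivalently, $by^2 = N_a(bx,bz)$ lies in the image of $N_a$, so in the Brauer group $[a,b) \otimes [a,y^2) \sim [a, by^2) = 0$, giving $A \cong [a, y^2)$.

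It remains to find a nonzero element of $V_1 \cap V_2$. Both $V_1$ and $V_2$ lie in the reduced-trace-zero subspace $A^- := \ker(\operatorname{Trd})$, since the canonical involution satisfies $\bar j = j$ and $\overline{ij} = \bar j\bar i = j(i+1) = ij$ in characteristic $2$ (and analogously for $J, IJ$). Because $\dim_F A^- = 3$ and $\dim_F V_1 = \dim_F V_2 = 2$, the dimension count
\[ \dim_F(V_1 \cap V_2) \;\geq\; 2 + 2 - 3 \;=\; 1 \]
forces $V_1 \cap V_2 \neq 0$. Selecting any nonzero $y$ in this intersection yields $A = [a,b) = [a,y^2) = [\alpha,y^2) = [\alpha,\beta)$, completing the argument.

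The main obstacle is the verification of the key claim, which reduces to a short manipulation of the defining relations; the split case $A \cong M_2(F)$ can be handled by a minor direct adjustment, since there $N_a$ may be isotropic and so some nonzero elements of $V_1$ could have square zero.
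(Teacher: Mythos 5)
Your argument is correct in substance but follows a genuinely different route from the paper's proof. The paper fixes the separable generators $x,z$ with $x^2+x=a$ and $z^2+z=\alpha$ and writes down the explicit linking element $y=xz+zx$, checking by hand that $xy+yx=zy+yz=y$ (with the degenerate case $y=0$ handled by noting that then $z=x+\lambda$). You instead locate $y$ abstractly: the two ``pure'' planes $V_1=Fj+F(ij)$ and $V_2=FJ+F(IJ)$ lie in the $3$-dimensional space $\ker(\operatorname{Trd})$, so they must meet nontrivially. The two proofs are really two views of the same mechanism --- writing $z=\lambda_0+\lambda_1 i+\lambda_2 j+\lambda_3 ij$ one computes $xz+zx=\lambda_2 j+\lambda_3 ij$, so the paper's element is precisely an explicit member of $V_1\cap V_2$ --- but your dimension count buys an existence statement that never degenerates to the zero element, while the paper's construction avoids any appeal to the involution or the reduced trace and hands you the element explicitly. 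Your supporting computations are correct: for $y=\mu j+\nu(ij)$ one indeed gets $yi+iy=y$ and $y^2=b\,N_a(\mu,\nu)$. The one real loose end is the case where the (possibly one-dimensional) intersection $V_1\cap V_2$ is spanned by an element with $y^2=b\,N_a(\mu,\nu)=0$: this forces $N_a$ to be isotropic, hence $A$ split, and you defer it to a ``minor direct adjustment'' without supplying one. Since $[a,y^2)$ is not even a symbol when $y^2=0$, the fix should be recorded --- in the split case one may simply take any square-central $y$ with $y^2=1$ (for instance $y=1$), since $1$ is a value of every norm form and so $[a,1)\cong M_2(F)\cong[\alpha,1)$. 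The paper's own proof silently skirts the same possibility (its $y=xz+zx$ can in principle be a nonzero nilpotent when $A$ is split), so this does not put you behind the published argument, but it is the one step you should not leave as an assertion.
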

Note, this chain lemma is different from the one in Draxl's book \cite[$\S$14 Theorem 5]{Draxl:1983}, where the linking element is in the separable slot rather than the inseparable one.
\begin{proof}
Let $x$ and $z$ be elements in $A$ satisfying $x^2+x=a$ and $z^2+z=\alpha$. Consider the element $y=xz+zx$.
If $y=0$, then $x$ and $z$ commute, which means $z=x+\lambda$ for some $\lambda \in F$, and so $[a,b)=[\alpha,b)$ and the statement is trivial.
Suppose then that $y \neq 0$.
Then $xy+yx=zy+yz=y$ by a straight-forward computation, and so $[a,b)=[a,y^2)=[\alpha,y^2)=[\alpha,\beta)$.
\end{proof}

\noindent {\bf Proof of Theorem \ref{mainthm}}
Let $([a,b),[c,d))$ and $([\alpha,\beta),[\gamma,\delta))$ be two vertices in $G_A$. Then,
\begin{eqnarray*}
[a,b)\otimes[c,d) &=& [a,N_a\cdot b)\otimes[c,N_c\cdot d)\textrm{ \kelly{(same node)}}\\
&=&[a+N_a\cdot b,N_a\cdot b)\otimes[c+N_c\cdot d,N_c\cdot d)\textrm{ \kelly{(same node)}}\\
&=&[a+c+N_a\cdot b+N_c\cdot d,N_a\cdot b)\\
&&\otimes [c+N_c\cdot d,N_a\cdot b\cdot N_c\cdot d) \textrm{ \kelly{(one edge move of type I)}}\\
&=&[a+c+N_a\cdot b+N_c\cdot d+\lambda^2+\lambda,N_a\cdot b)\\
&&\otimes [c+N_c\cdot d,N_a\cdot b\cdot N_c\cdot d) \textrm{ \kelly{(adding $\lambda^2+\lambda$ keeps same node)}}\\
&=& [a',b')\otimes[c',d') \textrm{ \kelly{(same node, rename slots)}}
\end{eqnarray*}
Similarly,
\begin{eqnarray*}
[\alpha,\beta)\otimes[\gamma,\delta) &=& [\alpha,N_\alpha\cdot\beta)\otimes [\gamma,N_\gamma\cdot\delta)\textrm{ \kelly{(same node)}} \\
&=&[\alpha+N_\alpha\cdot\beta,N_\alpha\cdot\beta)\otimes [\gamma+N_\gamma\cdot\delta,N_\gamma\cdot\delta)\textrm{ \kelly{(same node)}}\\
&=&[\alpha+\gamma+N_\alpha\cdot\beta+N_\gamma\cdot\delta,N_\alpha\cdot\beta)\\
&&\otimes[\gamma+N_\gamma\cdot\beta,N_\alpha\cdot\beta\cdot N_\gamma\cdot\delta)\textrm{ \kelly{(one edge move type I)}}\\
&=& [a',\beta')\otimes[\gamma',\delta')\textrm{ \kelly{(same node, rename 3 of the 4 slots and reuse $a'$)}}
\end{eqnarray*}
The first slot in the last line above is $a'$ by Lemma \ref{lem1}. It is now enough to show one can get between two nodes of the form $([a,b),[c,d))$ and $([a,\beta),[\gamma,\delta))$ in one move (removing the $'$s for notational convenience). There are two cases to consider;

If $\gamma = c$ so that $[a,b)\otimes [c,d) = [a,\beta)\otimes[c,\delta)$ then $[a,b\beta) = [c,d\delta)$ and so $[a,b\beta) = [a,e)=[c,e) = [c,d\delta)$ for some $e \in F^\times$ by Lemma \ref{lem2}. Since $[a+c,e)$, $[a,b\beta e)$ and $[c,d \delta e)$ are split, by \cite[Corollary 4.7.5]{GS} we have $e = b\beta N_a = N_{a+c} = d\delta N_c$. %\kelly{(why are these all equal? Chain lemma...different from Draxl's. See lemma on board. The $e$ exists bc of the chain lemma. It is the $y^2$ from the .jpg file. We should include it as a lemma. The equalities follow from standard reasons from equality of quaternion algebras)}. 
Write $e$ as $e=x^2(a+c)+xy+y^2$ for some $x,y \in F$. If $x=0$ then $e=y^2$ is a square and 
\begin{eqnarray*}
[a,b)\otimes[c,d) &=& [a,be)\otimes[c,de) \textrm{ \kelly{(same vertex)}}\\
&=&[a,\beta)\otimes[c,\delta)\textrm{ \kelly{(because $e=b\beta N_a = d\delta N_c$, same vertex)}}
\end{eqnarray*}
Hence the distance between $(Q_1,Q_2)$ and $(Q_3,Q_4)$ in this case is 2. If $x \ne 0$, we can without loss of generality assume $x=1$ by dividing the equation through by $x^2$ so that $e = (a+c)+y+y^2$ and 
\begin{eqnarray*}
[a,b)\otimes[c,d) &=& [a+y+y^2,b)\otimes [c,d) \textrm{ \kelly{(same vertex)}}\\
&=& [a+y+y^2,be)\otimes[c,de) \textrm{ \kelly{(type II move)}}\\
&=&[a,b^2\beta N_a)\otimes[c,d^2\delta N_c)\textrm{ \kelly{(same vertex, expanded $e$)}}\\
&=& [a,\beta)\otimes[c,\delta) \textrm{ \kelly{(same vertex)}}
\end{eqnarray*}

This makes it a full 3 distance. We want to reduce to the case $c=\gamma$ as above. If $[a,b)\otimes[c,d) = [a,\beta)\otimes[\gamma,\delta)$ then $[a,b\beta) = [c,d)\otimes[\gamma,\delta)$.

The biquaternion algebra on the right-hand-side is not division, so its underlying Albert form is isotropic. Its Albert Form is $[1,c+\gamma] \perp d[1,c]\perp \delta [1,\gamma]$. Therefore, by Lemma \ref{Albert}, $\lambda^2+\lambda x+(c+\gamma)x^2+N_c d+N_\gamma\delta = 0$ implying $\lambda^2+\lambda+c+\gamma = N_c\cdot d+N_\gamma \cdot\delta$. 
%\kelly{(need to make an argument about $x=0$ vs. $x \ne 0$ as above.)} 
Set $f=c+N_c\cdot d = \gamma+N_\gamma\delta+\lambda^2+\lambda$. Hence, 
 \begin{eqnarray*}
 [a,b)\otimes[c,d) &=& [a,b)\otimes[c,N_c\cdot d) \textrm{ \kelly{(same vertex)}} \\
 &=&[a,b)\otimes[c+N_c\cdot d,N_c\cdot d)\textrm{ \kelly{(same vertex)}}\\
 &=&[a,b)\otimes[f,N_c\cdot d)\textrm{ \kelly{(same vertex, relabeled to $f$)}}\\ \\
\left[a,\beta\right.)\otimes \left[\gamma,\delta\right.)&=&[a,\beta)\otimes[\gamma,N_\gamma\cdot \delta)\textrm{ \kelly{(same vertex)}} 
\\
&=&[a,\beta)\otimes[\gamma+N_\gamma \cdot \delta +\lambda^2+\lambda,N_\gamma\cdot \delta)\textrm{ \kelly{(same vertex)}} 
\\
&=&[a,\beta)\otimes[f,N_\gamma\cdot \delta)\textrm{ \kelly{(same vertex)}} 
  \end{eqnarray*}

Since $[a,b)\otimes[f,N_c\cdot d)$ and $[a,\beta)\otimes[f,N_\gamma\cdot \delta)$ are connected by at most one edge of type II, we are done.
\qed

It is left to ask the following question:
\begin{ques}
Can it be shown that, at least in some cases, the diameter is precisely 3 and not less?
\end{ques}
The natural candidate is the ``generic case" of $F=F_0(a,b,c,d)$ being the function field in 4 algebraically independent variables over some field $F_0$ of characteristic 2 (e.g., $\mathbb{F}_2$ or $\mathbb{F}_2^{sep}$) and $A=[a,b)\otimes [c,d)$. One should expect the vertices $([a,b),[c,d))$ and $([c,d),[a,b))$ to be a distance 3 from each other as in the analogous situation in \cite{Sivatski:2012}. Unfortunately, the valuation theory in characteristic 2 is much more complicated and therefore the argument in \cite{Sivatski:2012} cannot be easily adapted to our setting.

\section{Symbol Length Application}

Given a field $F$ of characteristic 2, a symbol algebra of degree $2^n$ over $F$ takes the form 
$[\omega,\beta)_{2^n,F}=F \langle \theta,j : \theta^2-\theta=\omega,  j\theta j^{-1}=\theta+(1,0,\dots,0) \rangle$
where $\omega$ is a truncated Witt vector of length $n$ over $F$, $\theta$ is a vector of variables that obeys the rules of Witt vectors when it comes to addition, $\theta^2$ is obtained by raising all the entries to the power of 2, and multiplication by a $j$ and $j^{-1}$ is slot-wise. See \cite{MammoneMerkurjev:1991} for further details.
These algebras generate ${_{2^n}Br}(F)$, and the symbol length of a class in ${_{2^n}Br}(F)$ is thus defined to be the minimal number of symbol algebras of degree $2^n$ required to express it as a tensor product.
Note that $[(\omega_1,\dots,\omega_n),\beta)_{2^n,F}^{\otimes 2^m}$ is Brauer equivalent to $[(\omega_1,\dots,\omega_{n-m}),\beta)_{2^{n-m},F}$.

Since ${_{2^m}Br}(F)$ embeds into ${_{2^n}Br}(F)$ for any $m<n$ in a natural way, one can ask what the symbol length of a class in ${_{2^m}Br}(F)$ is knowing its symbol length as a class in ${_{2^n}Br}(F)$.
In the special case of $m=n-1$, it is known that when the symbol length in ${_{2^n}Br}(F)$ is 1, then the symbol length in ${_{2^{n-1}}Br}(F)$ is at most 2 (\cite[Proposition 5]{MammoneMerkurjev:1991}), and when the symbol length in ${_{2^n}Br}(F)$ is 2, then the symbol length in ${_{2^{n-1}}Br}(F)$ is at most 4 (\cite[Corollary 5.5]{CFM}).
Here we consider the cases where the symbol length in ${_{2^n}Br}(F)$ is 3 or 4.
For the proofs, we will need the following lemma:

\begin{lem}\
\begin{enumerate}
\item $[\omega,\beta)_{2^n,F} \otimes [\pi,\delta)_{2^n,F}=[\omega+\pi,\beta)_{2^n,F} \otimes [\pi,\delta \beta^{-1})_{2^n,F}$
\item If $(t,0,\dots,0)=\omega+\pi$ as truncated Witt vectors, then $[\omega,\beta)_{2^n,F} \otimes [\pi,\delta)_{2^n,F}=[\omega,\beta t)_{2^n,F} \otimes [\pi,\delta t)_{2^n,F}$.
\end{enumerate}
\end{lem}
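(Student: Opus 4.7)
The overall plan is to reduce both identities to the two bilinearity laws for the Artin--Schreier--Witt symbol in characteristic $2$: Witt-additivity in the first slot, $[\omega+\pi,\beta)_{2^n,F} = [\omega,\beta)_{2^n,F} \otimes [\pi,\beta)_{2^n,F}$, and multiplicativity in the second slot, $[\pi,\beta\delta)_{2^n,F} = [\pi,\beta)_{2^n,F} \otimes [\pi,\delta)_{2^n,F}$, both understood as equalities of Brauer classes (which upgrade to isomorphism when the total degrees match, since all algebras involved are central simple). For (1), starting from the right-hand side,
\begin{align*}
[\omega+\pi,\beta)_{2^n,F} \otimes [\pi,\delta\beta^{-1})_{2^n,F}
&= [\omega,\beta)_{2^n,F} \otimes [\pi,\beta)_{2^n,F} \otimes [\pi,\delta\beta^{-1})_{2^n,F} \\
&= [\omega,\beta)_{2^n,F} \otimes [\pi,\delta)_{2^n,F},
\end{align*}
by first applying additivity in the first slot and then multiplicativity in the second.

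For (2), applying multiplicativity in both second slots and then additivity in the first yields
\begin{align*}
[\omega,\beta t)_{2^n,F} \otimes [\pi,\delta t)_{2^n,F}
&= [\omega,\beta)_{2^n,F} \otimes [\pi,\delta)_{2^n,F} \otimes [\omega,t)_{2^n,F} \otimes [\pi,t)_{2^n,F} \\
&= [\omega,\beta)_{2^n,F} \otimes [\pi,\delta)_{2^n,F} \otimes [(t,0,\ldots,0),t)_{2^n,F},
\end{align*}
so (2) reduces to showing that $[(t,0,\ldots,0),t)_{2^n,F}$ is split.

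The main obstacle is this last splitting. I plan to establish it by exhibiting $t$ as a norm from the cyclic Artin--Schreier--Witt extension $L/F$ of degree $2^n$ cut out by $\theta^2 - \theta = (t,0,\ldots,0)$ in truncated Witt vectors. In the base case $n=1$, $\theta_1(\theta_1+1) = \theta_1^2 + \theta_1 = t$ gives $t = N_{F(\theta_1)/F}(\theta_1)$. For general $n$, the plan is to compute $N_{L/F}(\theta_i)$ using the Galois action $\sigma(\theta) = \theta + (1,0,\ldots,0)$ together with the Witt addition formulas, and then take a suitable multiplicative combination of the $\theta_i$; for instance, in the $n=2$ case one finds $\sigma(\theta_2)=\theta_2+\theta_1$, hence $N_{L/F}(\theta_1) = t^2$ and $N_{L/F}(\theta_2) = t^3$, so that $N_{L/F}(\theta_2/\theta_1) = t$, and the general pattern suggests proceeding inductively up the tower.
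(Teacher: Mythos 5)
Your argument is correct in substance, and for part (2) it is the same reduction the paper uses; for part (1) it takes a genuinely different route. The paper proves (1) by an explicit internal computation: writing $\theta,j$ and $\rho,k$ for the standard generators of the two factors, it checks that $\theta+\rho,j$ and $\rho,jk^{-1}$ are two mutually commuting generating pairs inside $[\omega,\beta)_{2^n,F}\otimes[\pi,\delta)_{2^n,F}$, generating copies of $[\omega+\pi,\beta)_{2^n,F}$ and $[\pi,\delta\beta^{-1})_{2^n,F}$ respectively; this yields the isomorphism directly and is self-contained (it is, in effect, a proof of the bi-additivity you invoke, in the only instance needed). Your route instead quotes bi-additivity of the Artin--Schreier--Witt symbol pairing as a black box and upgrades Brauer equivalence to isomorphism by a degree count; that is legitimate, just less elementary. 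For part (2) both you and the paper reduce to the splitness of $[(t,0,\dots,0),t)_{2^n,F}$, which the paper simply asserts as known. The one genuinely incomplete step in your write-up is your proof of that splitting for general $n$: the norm computations for $n=1$ and $n=2$ are correct (indeed $N_{L/F}(\theta_1)=t^{2}$ and $N_{L/F}(\theta_2)=t^{3}$, so $N_{L/F}(\theta_2/\theta_1)=t$), but ``the general pattern suggests proceeding inductively'' is not a proof; you would need to establish, say, $N_{L/F}(\theta_1)=t^{2^{n-1}}$ and $N_{L/F}(\theta_n)=t^{2^n-1}$ and then use that these exponents are coprime, or else simply cite this Steinberg-type relation for Witt symbols (it is classical, going back to Albert and Teichm\"uller, and appears in the standard references on $p^n$-symbols such as Mammone--Merkurjev). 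Since the paper itself treats the fact as known, citing it would put you on equal footing; as written, your proposal proves slightly more than the paper but finishes slightly less.
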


\begin{proof}
For 1, write $\theta,j$ and $\rho,k$ as the generators of $[\omega,\beta)_{2^n,F}$ and $[\pi,\delta)_{2^n,F}$.
It is easy to verify that $\theta+\rho$ and $j$ commute with both $\rho$ and $jk^{-1}$, the first two generate $[\omega+\pi,\beta)_{2^n,F}$ and the latter two generate $[\pi,\delta \beta^{-1})_{2^n,F}$ and we are done.

For 2, it is enough to note that $[(t,0,\dots,0),t)_{2^n,F}$ is trivial in the Brauer group, and on the other hand it is Brauer equivalent to $[\omega,t)_{2^n,F} \otimes [\pi,t)_{2^n,F}$.
\end{proof}

\begin{thm}
Let $A$ be a class in ${_{2^{n-1}}Br}(F)$ whose symbol length in ${_{2^n}Br}(F)$ is 4. Then the symbol length of $A$ in ${_{2^{n-1}}Br}(F)$ is at most 32.
\end{thm}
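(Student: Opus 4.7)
The plan is to follow the template of \cite{CFM} for the symbol length $2$ case, now combining Lemma 4.1 with the biquaternion chain lemma (Theorem \ref{mainthm}). Write $A$ as a Brauer-equivalent tensor product $\bigotimes_{i=1}^{4}[\omega^{(i)},\beta_i)_{2^n,F}$ of four symbol algebras of degree $2^n$. Because $A$ has exponent dividing $2^{n-1}$, raising this expression to the $2^{n-1}$-st tensor power and using the identity $[\omega,\beta)_{2^n,F}^{\otimes 2^{n-1}}\sim[\omega_1,\beta)_{2,F}$ shows that the projected product $\bigotimes_{i=1}^{4}[\omega^{(i)}_1,\beta_i)_{2,F}$ is trivial in $\Br(F)$. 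Since quaternion classes are self-inverse, grouping the first two and the last two factors yields two isomorphic biquaternion algebras, so the pairs $([\omega^{(1)}_1,\beta_1),[\omega^{(2)}_1,\beta_2))$ and $([\omega^{(3)}_1,\beta_3),[\omega^{(4)}_1,\beta_4))$ are two vertices of the same graph $G_B$.

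By Theorem \ref{mainthm} these two vertices are joined by a chain of at most three edges, at most one of which is of type II. The crucial step is to lift this chain at the quaternion level to an identity at the $2^n$-level. A type I edge is lifted for free by Lemma 4.1(1), introducing no new symbols. A type II edge at the quaternion level multiplies both second slots by $a+c$; Lemma 4.1(2) realizes this lift only when the Witt sum $\omega^{(i)}+\omega^{(j)}$ reduces to $(a+c,0,\ldots,0)$. To force this condition, one applies Lemma \ref{lem2} to the identified biquaternion $B$ to extract a common quaternion second-slot $e$, and then splits each participating $2^n$-symbol via bilinearity of the second slot in the Brauer group as $[\omega,\beta)_{2^n,F}\sim[\omega,e)_{2^n,F}\otimes[\omega,\beta e^{-1})_{2^n,F}$. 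The factor $[\omega,\beta e^{-1})_{2^n,F}$ already has exponent at most $2^{n-1}$ because $\beta e^{-1}$ is a norm from $F[\theta]$ with $\theta^2+\theta=\omega_1$, while the paired factors $[\omega,e)_{2^n,F}\otimes[\pi,e)_{2^n,F}$ collapse by one application of Lemma 4.1(1) to $[\omega+\pi,e)_{2^n,F}$, whose projection $[\omega_1+\pi_1,e)_{2,F}$ is trivial, so this symbol too has exponent at most $2^{n-1}$.

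Adding the costs, each of the four original symbols is replaced by at most four symbols of degree $2^n$ and exponent at most $2^{n-1}$, for a total of at most $16$ such symbols. Finally, \cite[Proposition 5]{MammoneMerkurjev:1991} rewrites each such symbol as a product of at most two symbols of degree $2^{n-1}$, yielding the claimed bound $16\cdot 2=32$. The main obstacle is the bookkeeping controlling how many auxiliary symbols are introduced when the type II edge is lifted; the placement guarantee in Theorem \ref{mainthm}, that the single type II edge is wedged between the two type I edges, is what keeps this growth to a multiplicative factor of four per original symbol.
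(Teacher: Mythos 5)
Your setup — decomposing $A$ into four degree-$2^n$ symbols, projecting to quaternions, observing that $([\omega^{(1)}_1,\beta_1),[\omega^{(2)}_1,\beta_2))$ and $([\omega^{(3)}_1,\beta_3),[\omega^{(4)}_1,\beta_4))$ are two vertices of the same graph, and invoking Theorem \ref{mainthm} — is exactly the paper's. The gap is in the lifting step. You treat the type II edge as the only obstruction and claim type I edges ``lift for free,'' but the dominant cost is incurred at \emph{every node} of the chain: Theorem \ref{mainthm} produces at each stage a fresh presentation, e.g.\ $[a_1+c_1,b_1)_{2,F}=[a_2,b_2)_{2,F}$, and this is only an isomorphism of quaternion algebras. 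It does \emph{not} lift to an equality of $[(a_1+c_1,0,\dots,0),b_1)_{2^n,F}$ and $[(a_2,0,\dots,0),b_2)_{2^n,F}$ in ${_{2^n}Br}(F)$. The discrepancy at each node is a product of \emph{two} degree-$2^n$ symbols whose quaternion projections cancel, i.e.\ a class of exponent dividing $2^{n-1}$ and symbol length $2$ in ${_{2^n}Br}(F)$; the tool that bounds its symbol length in ${_{2^{n-1}}Br}(F)$ is \cite[Corollary 5.5]{CFM} (bound $4$), not \cite[Proposition 5]{MammoneMerkurjev:1991}, which only handles a single symbol. Your asserted inventory — sixteen individual degree-$2^n$ symbols, each of exponent dividing $2^{n-1}$ — is never constructed. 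If you try to manufacture such individual symbols via the common-slot trick of Lemma \ref{lem2} (writing $[\omega,\beta)_{2^n,F}\sim[\omega,e)_{2^n,F}\otimes[\omega,\beta e^{-1})_{2^n,F}$ and pairing the $e$-factors), each node identification costs three such symbols (two ``norm'' factors plus one collapsed ``difference'' factor), and with the number of identifications forced by a full length-$3$ chain the count exceeds $16$, so the final $16\cdot 2=32$ is numerology rather than a proof.

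The paper's proof avoids producing individual small-exponent symbols altogether. It inserts four explicit telescoping classes: each consists of the symbols at one node tensored with the opposite algebra of the symbols at the next node, with the type I transitions realized exactly by part (1) of the Witt-symbol lemma and the type II transition realized by part (2) after correcting one first slot to the Witt vector $(c_2,e_2,\dots,e_n)$ so that $(a_2,0,\dots,0)+(c_2,e_2,\dots,e_n)=(a_2+c_2,0,\dots,0)$. Adjacent classes cancel, so the product is $A$, and each of the four classes splits into two pairs of degree-$2^n$ symbols with matching quaternion projections, hence each class has symbol length at most $2\times 4=8$ in ${_{2^{n-1}}Br}(F)$ by \cite[Corollary 5.5]{CFM}, giving $4\times 8=32$. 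To repair your argument you would need either to reproduce this pairing-plus-telescoping structure, or to prove a lemma you are currently assuming: that a pair of degree-$2^n$ symbols with isomorphic quaternion projections is Brauer equivalent to a product of at most two individual degree-$2^n$ symbols each of exponent dividing $2^{n-1}$.
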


\begin{proof}
By assumption $A$ is Brauer equivalent to 
\[[\omega^1,\beta_1)_{2^n,F} \otimes [\omega^2,\beta_2)_{2^n,F}\otimes [\omega^3,\beta_3)_{2^n,F} \otimes [\omega^4,\beta_4)_{2^n,F},\] where $\omega^1,\omega^2,\omega^3,\omega^4$ denote different truncated Witt vectors ($\omega^i=(\omega_1^i,\dots,\omega_n^i)$) rather than powers of a given vector.
Since $A$ is in ${_{2^{n-1}}Br}(F)$, $[\omega^1_1,\beta_1)_{2,F} \otimes [\omega^2_1,\beta_2)_{2,F}\otimes [\omega^3_1,\beta_3)_{2,F} \otimes [\omega^4_1,\beta_4)_{2,F}=0$, and therefore $[\omega^1_1,\beta_1)_{2,F} \otimes [\omega^2_1,\beta_2)_{2,F}= [\omega^3_1,\beta_3)_{2,F} \otimes [\omega^4_1,\beta_4)_{2,F}$.
By the chain lemma we have just proved (Theorem \ref{mainthm}), these two symbol presentations of the same biquaternion algebra are connected by a path of 3 steps - up to two steps of type I and at most one step of type II in between.
We shall proceed under the assumption that the full path is required in order to move from one presentation to the other.
In the case of a shorter path, a similar argument applies, with a smaller bound on the symbol length.
Therefore, there exist $a_1,a_2,a_3,c_1,c_2,c_3 \in F$ and $b_1,b_2,b_3,d_1,d_2,d_3 \in F^\times$ such that
\begin{itemize}
\item  $[\omega^1_1,\beta_1)_{2,F}=[a_1,b_1)_{2,F}$, $[\omega^2_1,\beta_2)_{2,F}=[c_1,d_1)_{2,F}$\textrm{ \kelly{(same vertex)}}
\item $[a_1+c_1,b_1)_{2,F}=[a_2,b_2)_{2,F}$ and $[c_1,b_1d_1)_{2,F}=[c_2,d_2)_{2,F}$\textrm{ \kelly{(type I)}}
\item $[a_2,b_2(a_2+c_2))_{2,F}=[a_3,b_3)_{2,F}$ and $[c_2,d_2(a_2+c_2))_{2,F}=[c_3,d_3)_{2,F}$\textrm{ \kelly{(type II)}}
\item $[a_3+c_3,b_3)_{2,F}=[\omega^3_1,\beta_3)_{2,F}$ and $[c_3,d_3b_3)_{2,F}=[\omega^4_1,\beta_4)_{2,F}$\textrm{ \kelly{(type I)}}
\end{itemize}
Let $e_2,\dots,e_n \in F$ be the unique elements for which $(a_2,0,\dots,0)+(c_2,e_2,\dots,e_n)=(a_2+c_2,0,\dots,0)$.
Consider now the following four classes
\begin{align}
&[\omega^1,\beta_1)_{2^n,F} \otimes [\omega^2,\beta_2)_{2^n,F} \otimes \left([(a_1,0,\dots,0),b_1)_{2^n,F} \otimes [(c_1,0,\dots,0),d_1)_{2^n,F}\right)^{op}\\
&\begin{multlined}[t]
[(a_1,0,\dots,0)+(c_1,0,\dots,0),b_1)_{2^n,F} \otimes [(c_1,0,\dots,0),d_1b_1^{-1})_{2^n,F} \otimes \\
\hspace{1.5in}{\left([(a_2,0,\dots,0),b_2)_{2^n,F} \otimes [(c_2,e_2,\dots,e_n),d_2)_{2^n,F}\right)^{op}}
\end{multlined}\\
&\begin{multlined}[t]
[(a_2,0,\dots,0),b_2(a_2+c_2))_{2^n,F} \otimes [(c_2,e_2,\dots,e_n),d_2(a_2+c_2))_{2^n,F}\otimes \\
\hspace{1.5in}\left([(a_3,0,\dots,0),b_3)_{2^n,F} \otimes 
[(c_3,0,\dots,0),d_3)_{2^n,F}\right)^{op}
\end{multlined}\\
&[(a_3,0,\dots,0),b_3)_{2^n,F} \otimes [(c_3,0,\dots,0),d_3)_{2^n,F} \otimes [\omega^3,\beta_3)_{2^n,F} \otimes [\omega^4,\beta_4)_{2^n,F}
\end{align}
Each of these classes is of symbol length at most 8 in ${_{2^{n-1}}Br}(F)$ (because each such class can be written as the product of two classes, where each class is of exponent dividing $2^{n-1}$ a product of two symbols of degree $2^n$), and their product is $A$. Therefore, the symbol length of $A$ in ${_{2^{n-1}}Br}(F)$ is at most 32.
\end{proof}

\begin{thm}
Let $A$ be a class in ${_{2^{n-1}}Br}(F)$ whose symbol length in ${_{2^n}Br}(F)$ is 3. Then the symbol length of $A$ in ${_{2^{n-1}}Br}(F)$ is at most 9.
\end{thm}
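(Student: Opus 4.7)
The plan is to mirror the structure of the proof for the length $4$ case. Write $A \sim X_1 \otimes X_2 \otimes X_3$ at degree $2^n$ with $X_i = [\omega^i,\beta_i)_{2^n,F}$, and set $Q_i = [\omega^i_1, \beta_i)_{2, F}$ for the associated top quaternion. Since $A$ has exponent dividing $2^{n-1}$, the class $Q_1 \otimes Q_2 \otimes Q_3$ is trivial in ${_2Br}(F)$, so the biquaternion $B := Q_1 \otimes Q_2$ is Brauer-equivalent to $Q_3$; equivalently, $B$ admits a presentation of the form $(Q_3, S)$ in $G_B$ with $S$ a split quaternion.

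Next, I would apply Theorem~\ref{mainthm} to $B$ to connect the two presentations $(Q_1, Q_2)$ and $(Q_3, S)$ by a path of at most three edges, consisting in the worst case of type I, type II, type I edges in that order. Each edge lifts to an identity at degree $2^n$ via Parts~1 and~2 of the lemma that introduces the symbol-length discussion: a type I edge is lifted exactly by Part~1, and a type II edge is lifted by Part~2 after a Witt-vector preprocessing that introduces a correction class whose first Witt component is trivial, which reduces via the Verschiebung identity to a single symbol at degree $2^{n-1}$. The resulting identity
\[ X_1 \otimes X_2 \ \sim\ X_1' \otimes X_2' \otimes W \]
has $X_1'$ with top $Q_3$, $X_2'$ with split top, and $W$ an accumulation of correction symbols at degree $2^{n-1}$.

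Combining with $X_3$ gives $A \sim X_1' \otimes X_2' \otimes X_3 \otimes W$. The symbol $X_2'$ has split top, so \cite[Proposition~5]{MammoneMerkurjev:1991} bounds its symbol length at degree $2^{n-1}$ by $2$; the pair $X_1' \otimes X_3$ has trivial top $Q_3 \otimes Q_3 = 0$, so \cite[Corollary~5.5]{CFM} gives symbol length at most $4$ at degree $2^{n-1}$; careful accounting of the correction class $W$ accumulated through the chain lift yields at most $3$ further symbols. Summing gives $2 + 4 + 3 = 9$.

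The main obstacle is the careful bookkeeping of the correction class $W$: the naive \cite[Proposition~5]{MammoneMerkurjev:1991} bound of $2$ symbols per correction would be too loose, so one needs to exploit the Verschiebung structure of each correction (trivial first Witt component) to reduce it to a single symbol at degree $2^{n-1}$. Handling the $2^n$-th-power vs.\ square-factor discrepancies in the inseparable slots across chain steps, and ensuring the lifted chain genuinely terminates at a presentation $(Q_3, S)$ with $S$ split, are the other technical points requiring care.
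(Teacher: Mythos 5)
Your route is genuinely different from the paper's, and the difference matters: the paper does \emph{not} invoke the chain lemma (Theorem \ref{mainthm}) for the length-3 case at all. Instead it notes that $[\omega_1,\beta_1)_{2,F}\otimes[\pi_1,\beta_2)_{2,F}\sim[\rho_1,\beta_3)_{2,F}$ forces the left-hand biquaternion algebra to be non-division, hence its Albert form is isotropic; by Lemma \ref{Albert} this produces $\lambda\in F$ and values $\gamma_1$ of $[1,\omega_1]$ and $\gamma_2$ of $[1,\pi_1]$ with $\omega_1+\pi_1+\lambda^2+\lambda=\beta_1\gamma_1+\beta_2\gamma_2$. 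The class $A$ is then written as the product of three explicit pieces: $[\omega,\beta_1)_{2^n,F}\otimes[\pi,\beta_2)_{2^n,F}\otimes\left([\omega,\gamma_1)_{2^n,F}\otimes[\pi,\gamma_2)_{2^n,F}\right)^{op}$ (bounded by $4$ via \cite[Corollary 5.5]{CFM}), a single degree-$2^n$ symbol whose top residue is split (length $1$ by \cite[Lemma 5.4]{CFM}), and $[\pi+(\gamma_2,0,\dots,0),\gamma_2\gamma_1^{-1})_{2^n,F}\otimes[\rho,\beta_3)_{2^n,F}$ (bounded by $4$), giving $4+1+4=9$. In other words, the isotropy of the Albert form is used once, directly, rather than being routed through the full three-edge chain.

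The obstacle you flag at the end is in fact fatal to your accounting. A chain step, lifted to degree $2^n$, does not cost a single degree-$2^{n-1}$ symbol. The type I identity of Part 1 of the lifting lemma is exact, but moving along an edge of $G_B$ first requires re-choosing the symbol presentation of each quaternion factor (an isomorphism $[\omega^i_1,\beta_i)_{2,F}=[a,b)_{2,F}$ of algebras, not an equality of symbols), and at degree $2^n$ each such re-presentation costs a class of the form $[\sigma,\beta)_{2^n,F}\otimes\left([\tau,b)_{2^n,F}\right)^{op}$: a product of two degree-$2^n$ symbols whose top is trivial only as a Brauer class. These corrections are not of the shape $[(0,e_2,\dots,e_n),\beta)_{2^n,F}$, so no Verschiebung identity collapses them to one symbol; the only available bound is $4$ apiece from \cite[Corollary 5.5]{CFM} (your appeal to \cite[Proposition 5]{MammoneMerkurjev:1991} does not apply, since they are not single symbols). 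This is precisely why the length-4 theorem in the paper lands at $4\times 8=32$. Run your scheme with honest bookkeeping and $W$ costs roughly $8$ per vertex change, i.e.\ on the order of $24$, for a total near $30$ rather than $9$. To recover $9$ you must bypass the chain and use the Albert-form isotropy directly, as above.
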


\begin{proof}
By the assumption $A$ is Brauer equivalent to $[\omega,\beta_1)_{2^n,F} \otimes [\pi,\beta_2)_{2^n,F}\otimes [\rho,\beta_3)_{2^n,F}$.
Then $[\omega_1,\beta_1)_{2,F} \otimes [\pi_1,\beta_2)_{2,F}$ is Brauer equivalent to $[\rho_1,\beta_3)_{2,F}$. Therefore the underlying Albert form of the biquaternion algebra is isotropic, which means that $\omega_1+\pi_1+\lambda^2+\lambda+\beta_1(\omega_1 x^2+xy+y^2)+\beta_2(\pi_1 t^2+tz+z^2)$ for some $\lambda,x,y,z,t \in F$.
We continue under the assumption that $\gamma_1=\omega_1 x^2+xy+y^2$ and $\gamma_2=\pi_1 t^2+tz+z^2$ are nonzero. A similar argument applies when one of them is zero.
Consider the classes
$$[\omega,\beta_1)_{2^n,F} \otimes [\pi,\beta_2)_{2^n,F} \otimes \left([\omega,\gamma_1)_{2^n,F} \otimes [\pi,\gamma_2)_{2^n,F}\right)^{op}$$
$$[\omega+(\gamma_1,0,\dots,0)+\pi+(\gamma_2,0,\dots,0),\gamma_1)_{2^n,F}$$
$$[\pi+(\gamma_2,0,\dots,0),\gamma_2\gamma_1^{-1})_{2^n,F}\otimes [\rho,\beta_3)_{2^n,F}$$
The first class is of symbol length at most 4 in ${_{2^{n-1}}Br}(F)$ because of \cite[Corollary 5.5]{CFM}, the second term is of symbol length 1 in that group by \cite[Lemma 5.4]{CFM}, and the last term is of symbol length at most 4. Altogether, the tensor product of these three classes, which is Brauer equivalent to $A$, is of symbol length at most 9.
\end{proof}

\section*{Declarations}

Ethical approval, competing interests, funding and availability of data and materials are not applicable to this work. The authors were equally involved in also stages of preparation.

\bibliographystyle{alpha}

\end{document}